\documentclass[a4]{article}

\usepackage{enumerate}
\usepackage{amsmath,amssymb,amsthm}
\usepackage{ascmac}
\usepackage{eepic}
\usepackage{color}

\newtheorem{thm}{Theorem}[section]
\newtheorem{prop}[thm]{Proposition}

\numberwithin{equation}{section}

\DeclareFontEncoding{OT2}{}{}
\DeclareFontSubstitution{OT2}{wncyr}{m}{n}
\DeclareSymbolFont{cyss}{OT2}{wncyss}{m}{n}
\DeclareMathSymbol{\sh}{\mathbin}{cyss}{`x}

\newcommand{\longr}{\longrightarrow}

\newcommand{\C}{\mathbf{C}}

\newcommand{\bP}{{\mathbf P}}
\newcommand{\bH}{{\mathbf H}}
\newcommand{\bD}{{\mathbf D}}

\DeclareMathOperator{\Li}{\mathrm{Li}}

\title{The hexagon equations for dilogarithms and \\ the Riemann-Hilbert problem}
\author{Shu Oi\thanks{Department of Mathematics, College of Science, Rikkyo university. \endgraf \hspace{1em}3-34-1, 
Nishi-Ikebukuro, Toshima-ku, Tokyo 171-8501, Japan. \endgraf \hspace{1em}e-mail: {\tt shu-oi@rikkyo.ac.jp}}
\and Kimio Ueno\thanks{Department of Mathematics, School of Fundamental Sciences and Engineering,\endgraf \hspace{1em} Faculty of Science and Engineering, Waseda University. \endgraf \hspace{1em}3-4-1, Okubo, Shinjuku-ku, 
Tokyo 169-8555, Japan. \endgraf \hspace{1em}e-mail: {\tt uenoki@waseda.jp}
}}
\date{}

\pagestyle{plain}


\begin{document}

\insert\footins{\footnotesize 2010 {\it Mathematics Subject Classification.} Primary 34M50,11G55; Secondary 30E25,11M06,32G34;}
\maketitle

\begin{abstract}
In this article we present the hexagon equations for dilogarithms which comes from the analytic continuation 
of the dilogarithm $\Li_2(z)$ to $\bP^1 \setminus \{0,1,\infty\}$. 
The hexagon equations are equivalent to the coboundary relations for a certain 1-cocycle of holomorphic functions 
on $\bP^1$, and are solved by the Riemann-Hilbert problem of additive type.
They uniquely characterize the dilogarithm under the normalization condition. 
\end{abstract}

\section{Introduction}\label{sec:intro}
Let $\bD_{\alpha} \ (\alpha=0,1,\infty)$ be domains in the Riemann sphere $\bP^1$ defined by
\begin{align*}
      & \bD_0  = \C \setminus \{z=x | 1 \leq x < +\infty \}, \
       \bD_1  = \C \setminus \{z=x | -\infty < x  \leq 0 \}, \\
      & \bD_{\infty} = \bP^1 \setminus \{z=x | 0 \leq x \leq 1\}, 
\end{align*}
which are open neighborhoods of the points $0,1,\infty$, respectively. They give 
an open covering of the Riemann sphere $\bP^1 = \bD_0 \cup \bD_1 \cup \bD_{\infty}$, furthermore, satisfy 
$\bD_0 \cap \bD_1 \cap \bD_{\infty} = \bH_+ \cup \bH_-$  where $\bH_+$ ($\bH_-$) is the upper (lower) half plane.

According to Theorem 2 in \cite{OiU1}, the dilogarithms $\Li_2(z)$ and $\Li_2(1-z)$ are characterized 
as the solutions $f_0(z)$ and $f_1(z)$ holomorphic in $\bD_0$ and $\bD_1$, respectively, to the following 
functional equation
\begin{align}\label{eq:dilog_RH}
      f_0(z) + \log z \log(1-z) + f_1(z) = \zeta(2) \quad (z \in \bD_0 \cap \bD_1) 
\end{align}
under the asymptotic condition
$f'_{\alpha}(z) \to 0 \ (z \to \infty, \ z \in \bD_{\alpha}, \alpha=0,1)$ and the normalization condition $f_0(0)=0$.

The equation \eqref{eq:dilog_RH} says that the function 
$-\log z \log(1-z) + \zeta(2)$, 
which is holomorphic in $\bD_0 \cap \bD_1$, decomposes to the sum of
 $f_0(z)$ and $f_1(z)$ holomorphic in $\bD_0$ and $\bD_1$. 
That is to say, \eqref{eq:dilog_RH} is a Riemann-Hilbert problem  (or, Plemelj-Birkhoff decomposition) of additive type
\cite{Bi,Mu,Pl}.
However the previous asymptotic condition does not naturally come from the Riemann-Hilbert problem, 
is a rather technical one. So we would like to avoid that, and give more natural formulation of 
the Riemann-Hilbert problem.

\section{The hexagon relations for dilogarithms}
Let $\log(1-z)$ be the principal value of the logarithm in $\bD_0$, namely, $\log 1 =0$. 
Define a branch of the dilogarithm $\Li_2(z)$ in $\bD_0$ by 
\begin{equation}
\Li_2(z) = - \int_0^z \frac{\log(1-t)}{t} \, dt.
\end{equation}
Then the Taylor expansion at $z=0$ is 
\begin{equation}
\Li_2(z) = \sum_{n=1}^{\infty} \frac{z^n}{n^2} \quad (|z|<1)
\end{equation}
and by Abel's continuous theorem, we have 
\begin{equation}
\lim_{z \to 1,z \in \bD_0}\Li_2(z)=\zeta(2). 
\end{equation}

It is well known that the dilogarithm satisfies the following functional relations \cite{Le}; 
\begin{align}
    & \Li_2(z) + \log z \log(1-z) + \Li_2(1-z) = \zeta(2) \quad (z \in \bD_0 \cap \bD_1),
                                                                      \label{eq:dilog_euler1} \\
    & \Li_2\big( \frac{z}{z-1} \big) = -\Li_2(z) - \frac{1}{2} \log^2(1-z) \quad (z \in \bD_0). 
                                                                             \label{eq:dilog_landen}
\end{align}
The first formula describes the direct analytic continuation of $\Li_2(z)$ along the path $\overline{01}$, 
and the second one is regarded as the half-monodromy continuation of $\Li_2(z)$ at $z=0$. 
We call them Euler's inversion formula and Landen's inversion formula, respectively. 

To consider the global analytic continuation of the dilogarithm, let us introduce 
automorphisms $t_i(z)$ of $\bP^1$ as follows:
\begin{align}\label{eq:P1auto}
      \left\{
      \begin{array}{cll}
         t_0(z)=z_0=z, & \quad  t_1(z)=z_1=1-z,  & \quad  t_2(z)=z_2=\frac{z-1}{z},  \\
         t_3(z)=z_3=\frac{1}{z}, & \quad  t_4(z)=z_4=\frac{1}{1-z}, & \quad  t_5(z)=z_5=\frac{z}{z-1}. 
      \end{array}
      \right.
\end{align}
They induce permutations $\sigma_{t_i}$ of the points $\{0, 1, \infty\}$, 
\begin{align*}
      & \sigma_{t_0}=\begin{pmatrix}0&1&\infty\\0&1&\infty\end{pmatrix}, & \
      &  \sigma_{t_1} =\begin{pmatrix}0&1&\infty\\1&0&\infty\end{pmatrix},& \
      &  \sigma_{t_2}  =\begin{pmatrix}0&1&\infty\\ \infty&0&1\end{pmatrix}, & \\
      & \sigma_{t_3}=\begin{pmatrix}0&1&\infty\\ \infty &1&0\end{pmatrix}, & \
      &  \sigma_{t_4}  =\begin{pmatrix}0&1&\infty\\1&\infty&0\end{pmatrix}, & \
      & \sigma_{t_5}  =\begin{pmatrix}0&1&\infty\\0&\infty&1\end{pmatrix}, & 
\end{align*}
and give analytic isomorphisms \ $t_i : \bD_{\alpha} \ \longr \ \bD_{t_i(\alpha)}$. Furthermore, we should note that
\begin{align*}
        z_{j-1}=\frac{z_j}{z_j-1}, \quad  z_{j+1}=1-z_j, \  \quad (j=0,2,4) 
\end{align*}
where the indices are defined modulo 6.
Taking the inverse image of the relations 
\eqref{eq:dilog_euler1} and \eqref{eq:dilog_landen} induced by $t_j \ (j=0,2,4)$, 
we have the following proposition:

\begin{prop}\label{prop:hexagon}
The dilogarithm $\Li_2(z)$ satisfies
\begin{align}
  & \Li_2(z_j) + \log z_j \log z_{j+1} + \Li_2(z_{j+1}) = \zeta(2) \quad 
  (z \in \bD_{t_j^{-1}(0)} \cap \bD_{t_j^{-1}(1)}), \label{eq:hexagon0} \\
  & \Li_2(z_{j-1}) = - \Li_2(z_{j}) - \frac{1}{2} \log^2z_{j+1}  \quad (z \in \bD_{t_{j}^{-1}(0)}), 
                                                                             \label{eq:hexagon1} 
\end{align}
where $j=0,2,4$, and all the indices are defined modulo 6.
\end{prop}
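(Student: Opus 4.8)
\emph{Proof strategy.} The plan is to obtain \eqref{eq:hexagon0} and \eqref{eq:hexagon1} simply by pulling back Euler's inversion formula \eqref{eq:dilog_euler1} and Landen's inversion formula \eqref{eq:dilog_landen} along the Möbius transformations $t_j$ for $j=0,2,4$, as already announced in the paragraph preceding the statement. For $j=0$ there is nothing to do: $t_0=\mathrm{id}$, $z_1=1-z_0$, $z_5=z_0/(z_0-1)$, and \eqref{eq:hexagon0}, \eqref{eq:hexagon1} become \eqref{eq:dilog_euler1}, \eqref{eq:dilog_landen} verbatim. So the content is in the cases $j=2,4$.

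First I would fix once and for all the meaning of the symbols in the statement. For each index $i$ (read mod $6$) let $\Li_2(z_i):=t_i^{*}\Li_2$ be the pullback along $t_i$ of the branch of the dilogarithm on $\bD_0$ fixed in Section~2; since $t_i$ restricts to an analytic isomorphism $\bD_{t_i^{-1}(0)}\to\bD_0$, this $\Li_2(z_i)$ is a single-valued holomorphic function on $\bD_{t_i^{-1}(0)}$, and at a point $z$ it equals the $\bD_0$-branch of $\Li_2$ evaluated at $z_i=t_i(z)$. Likewise let $\log z_i$ denote the principal logarithm. Then substituting $z\mapsto z_j=t_j(z)$ into \eqref{eq:dilog_euler1} gives
\[
\Li_2(z_j)+\log z_j\,\log(1-z_j)+\Li_2(1-z_j)=\zeta(2),
\]
and the relation $z_{j+1}=1-z_j$ turns this into \eqref{eq:hexagon0}; substituting $z\mapsto z_j$ into \eqref{eq:dilog_landen} gives
\[
\Li_2\!\Bigl(\tfrac{z_j}{z_j-1}\Bigr)=-\Li_2(z_j)-\tfrac12\log^2(1-z_j),
\]
and the relations $z_{j-1}=z_j/(z_j-1)$, $z_{j+1}=1-z_j$ turn this into \eqref{eq:hexagon1}. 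For the domains: because $t_j\colon\bD_\alpha\to\bD_{t_j(\alpha)}$ is an isomorphism, ``$z_j\in\bD_0\cap\bD_1$'' (the hypothesis of \eqref{eq:dilog_euler1}) translates to ``$z\in\bD_{t_j^{-1}(0)}\cap\bD_{t_j^{-1}(1)}$'', and ``$z_j\in\bD_0$'' (the hypothesis of \eqref{eq:dilog_landen}) translates to ``$z\in\bD_{t_j^{-1}(0)}$'', exactly the ranges written in the proposition.

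The only step needing care --- and the one I expect to be the main, though routine, obstacle --- is the bookkeeping that makes the pulled-back identities literally equal to \eqref{eq:hexagon0}, \eqref{eq:hexagon1} with the conventions just set: one must check that on the asserted domains the principal branches $\log z_j,\log z_{j+1}$ are defined and agree with the $t_j$-pullbacks of the principal $\log z,\log(1-z)$, and that $\Li_2(z_{j\pm1})$ in the sense above equals the $\bD_0$-branch of $\Li_2$ at $z_{j\pm1}$. Here the useful facts are that $\bD_0\cap\bD_1=\C\setminus\bigl((-\infty,0]\cup[1,\infty)\bigr)$ --- so $z_j\in\bD_0\cap\bD_1$ forces $z_{j+1}=1-z_j\in\bD_0\cap\bD_1$, and $z_j\in\bD_0$ forces $z_{j+1}\in\bD_1$ --- together with $t_{j+1}=t_1\circ t_j$ and $t_{j-1}=t_5\circ t_j$, which give $\bD_{t_{j+1}^{-1}(0)}=\bD_{t_j^{-1}(1)}$ and $\bD_{t_{j-1}^{-1}(0)}=\bD_{t_j^{-1}(0)}$, so the symbols $\Li_2(z_{j\pm1})$ are indeed defined on the asserted domains. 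Once the two holomorphic functions on each side are known to live on the same connected domain, it is enough to verify their equality at a single convenient real point (for instance $z=2$ when $j=2$ and $z=-1$ when $j=4$, where all the logarithms have positive real arguments and all the dilogarithm values are real) and invoke the identity theorem; then \eqref{eq:hexagon0} and \eqref{eq:hexagon1} follow from the substitution.
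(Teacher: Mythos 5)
Your proposal is correct and coincides with the paper's own argument: the paper proves Proposition~\ref{prop:hexagon} precisely by pulling back Euler's and Landen's formulas \eqref{eq:dilog_euler1}, \eqref{eq:dilog_landen} along $t_j$ ($j=0,2,4$), using $z_{j+1}=1-z_j$, $z_{j-1}=z_j/(z_j-1)$ and the isomorphisms $t_j\colon\bD_\alpha\to\bD_{t_j(\alpha)}$, which is exactly what you do. Your extra bookkeeping on branches and domains (and the optional check at a real point) only makes explicit what the paper leaves implicit.
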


We refer to them as the hexagon relations for dilogarithms, which describe the analytic continuation of $\Li_2(z)$ 
along the paths through the neighborhood of $0 \to 1 \to \infty \to 0$ in $\bH_{\pm}$ (see the figures below).

\begin{center}
\unitlength 0.1in
\begin{picture}( 38.1000, 13.9500)( 11.8000,-28.1000)
%
{\color[named]{Black}{%
\special{pn 4}%
\special{sh 1}%
\special{ar 2810 2550 8 8 0  6.28318530717959E+0000}%
\special{sh 1}%
\special{ar 2810 2550 8 8 0  6.28318530717959E+0000}%
}}%
%
{\color[named]{Black}{%
\special{pn 4}%
\special{sh 1}%
\special{ar 1430 2600 8 8 0  6.28318530717959E+0000}%
\special{sh 1}%
\special{ar 1430 2600 8 8 0  6.28318530717959E+0000}%
}}%
%
{\color[named]{Black}{%
\special{pn 8}%
\special{pa 2000 1690}%
\special{pa 1480 2460}%
\special{fp}%
\special{sh 1}%
\special{pa 1480 2460}%
\special{pa 1534 2416}%
\special{pa 1510 2416}%
\special{pa 1502 2394}%
\special{pa 1480 2460}%
\special{fp}%
}}%
\put(13.6000,-24.7000){\makebox(0,0)[rt]{$0$}}%
\put(28.6000,-25.9000){\makebox(0,0)[lb]{$1$}}%
\put(21.0000,-14.8000){\makebox(0,0){$\infty$}}%
%
{\color[named]{Black}{%
\special{pn 4}%
\special{sh 1}%
\special{ar 4760 2590 8 8 0  6.28318530717959E+0000}%
\special{sh 1}%
\special{ar 4760 2590 8 8 0  6.28318530717959E+0000}%
}}%
%
{\color[named]{Black}{%
\special{pn 4}%
\special{sh 1}%
\special{ar 4060 1640 8 8 0  6.28318530717959E+0000}%
\special{sh 1}%
\special{ar 4060 1640 8 8 0  6.28318530717959E+0000}%
}}%
%
{\color[named]{Black}{%
\special{pn 8}%
\special{ar 4060 1630 192 192  2.2142974 6.2831853}%
\special{ar 4060 1630 192 192  0.0000000 0.8224183}%
}}%
%
{\color[named]{Black}{%
\special{pn 8}%
\special{pa 3700 2620}%
\special{pa 4560 2620}%
\special{fp}%
\special{sh 1}%
\special{pa 4560 2620}%
\special{pa 4494 2600}%
\special{pa 4508 2620}%
\special{pa 4494 2640}%
\special{pa 4560 2620}%
\special{fp}%
}}%
%
{\color[named]{Black}{%
\special{pn 8}%
\special{pa 4680 2440}%
\special{pa 4210 1760}%
\special{fp}%
\special{sh 1}%
\special{pa 4210 1760}%
\special{pa 4232 1826}%
\special{pa 4240 1804}%
\special{pa 4264 1804}%
\special{pa 4210 1760}%
\special{fp}%
}}%
%
{\color[named]{Black}{%
\special{pn 8}%
\special{pa 3930 1770}%
\special{pa 3590 2410}%
\special{fp}%
\special{sh 1}%
\special{pa 3590 2410}%
\special{pa 3640 2362}%
\special{pa 3616 2364}%
\special{pa 3604 2342}%
\special{pa 3590 2410}%
\special{fp}%
}}%
%
{\color[named]{Black}{%
\special{pn 4}%
\special{sh 1}%
\special{ar 3510 2600 8 8 0  6.28318530717959E+0000}%
\special{sh 1}%
\special{ar 3510 2600 8 8 0  6.28318530717959E+0000}%
}}%
%
{\color[named]{Black}{%
\special{pn 8}%
\special{ar 3510 2600 198 198  6.2831853 6.2831853}%
\special{ar 3510 2600 198 198  0.0000000 5.1522316}%
}}%
\put(35.6000,-25.8000){\makebox(0,0)[lb]{$0$}}%
\put(46.9000,-25.9000){\makebox(0,0)[rb]{$1$}}%
\put(40.8000,-15.5000){\makebox(0,0){$\infty$}}%
%
{\color[named]{Black}{%
\special{pn 8}%
\special{pa 1600 2620}%
\special{pa 2650 2620}%
\special{fp}%
\special{sh 1}%
\special{pa 2650 2620}%
\special{pa 2584 2600}%
\special{pa 2598 2620}%
\special{pa 2584 2640}%
\special{pa 2650 2620}%
\special{fp}%
}}%
%
{\color[named]{Black}{%
\special{pn 8}%
\special{pa 2670 2390}%
\special{pa 2200 1710}%
\special{fp}%
\special{sh 1}%
\special{pa 2200 1710}%
\special{pa 2222 1776}%
\special{pa 2230 1754}%
\special{pa 2254 1754}%
\special{pa 2200 1710}%
\special{fp}%
}}%
%
{\color[named]{Black}{%
\special{pn 8}%
\special{ar 1390 2640 212 212  5.1760366 6.0978374}%
}}%
%
{\color[named]{Black}{%
\special{pn 4}%
\special{sh 1}%
\special{ar 2100 1600 8 8 0  6.28318530717959E+0000}%
\special{sh 1}%
\special{ar 2100 1600 8 8 0  6.28318530717959E+0000}%
}}%
%
{\color[named]{Black}{%
\special{pn 8}%
\special{ar 2100 1570 158 158  0.9151007 2.1763410}%
}}%
%
{\color[named]{Black}{%
\special{pn 8}%
\special{ar 2830 2540 206 206  3.7944393 3.8556834}%
}}%
%
{\color[named]{Black}{%
\special{pn 8}%
\special{ar 2810 2540 184 184  2.8023000 3.8899707}%
}}%
%
{\color[named]{Black}{%
\special{pn 8}%
\special{ar 4760 2590 190 190  4.2130423 6.2831853}%
\special{ar 4760 2590 190 190  0.0000000 3.1415927}%
}}%
\end{picture}%

\end{center}

\section{The hexagon equations for dilogarithms and a 1-cocycle of holomorphic functions on $\bP^1$}
In the sequel, the index $j$ denotes $0,2,4$, and all the indices are defined modulo 6. 

Let $f_j(z),\ f_{j-1}(z)$ be functions holomorphic in  $\bD_{t_j^{-1}(0)}$, 
and consider the following functional equations for them:
\begin{align}
   & f_j(z) + \log z_j \log z_{j+1} + f_{j+1}(z) = \zeta(2) 
                              \quad \text{{\rm in}  $\bD_{t_j^{-1}(0)} \cap \bD_{t_j^{-1}(1)}$}, 
                                            \label{eq:dilog_RH0} \\
   & f_{j-1}(z) = - f_j(z) - \frac{1}{2} \log^2z_{j+1} \quad \text{{\rm in} $\bD_{t_j^{-1}(0)}$}.  \label{eq:dilog_RH1} 
\end{align}
We call them the hexagon equations for dilogarithms. 
We show that they are relevant to the cohomology theory on $\bP^1$:

\begin{prop}\label{prop:1_cocycle}
To the covering $\{\bD_0, \bD_1, \bD_{\infty}\}$ of $\bP^1$, we attach a 0-cochain 
$f=\{f_{\bD_{\alpha}}\}_{\alpha=0,1,\infty}$ and a 1-cochain 
$F=\{F_{\bD_{\alpha}\bD_{\beta}}\}_{\alpha, \beta = 0, 1, \infty}$ where $f_{\bD_{\alpha}}$ is a holomorphic 
function on $\bD_{\alpha}$ and $F_{\bD_{\alpha}\bD_{\beta}}$ is a holomorphic function on 
$\bD_{\alpha} \cap \bD_{\beta}$ defined by 
\begin{align}
       f_{\bD_{t_j^{-1}(0)}}(z) = f_j(z), \quad 
       F_{\bD_{t_j^{-1}(0)},\bD_{t_j^{-1}(1)}}(z) = \dfrac{1}{2}\log^2z _j- \log z_j \log z_{j+1} + \zeta(2), 
        \label{eq:1_cocycle} 
\end{align}
and $F_{\bD_{\beta}\bD_{\alpha}} = - F_{\bD_{\alpha}\bD_{\beta}}$. Then we have the following: 
\begin{enumerate}
\item $F$ is a 1-cocycle. That is to say, $F$ satisfies the cocycle condition
      \begin{align}\label{eq:cocycle_cond}
         F_{\bD_1\bD_{\infty}}(z) - F_{\bD_0\bD_{\infty}}(z) + F_{\bD_0\bD_1}(z)=0 
                           \quad (z \in \bD_0 \cap \bD_1 \cap \bD_{\infty}).
      \end{align}
\item The condition \eqref{eq:cocycle_cond} is equivalent to the Euler formula $\zeta(2)=\frac{\pi^2}{6}$. 
\item The hexagon equations \eqref{eq:dilog_RH0} and \eqref{eq:dilog_RH1} 
      are equivalent to \eqref{eq:dilog_RH1} and the coboundary relations 
\begin{align}
       F_{\bD_{t_j^{-1}(0)},\bD_{t_j^{-1}(1)}}(z)=f_{\bD_{t_j^{-1}(0)}}(z) - f_{\bD_{t_j^{-1}(1)}}(z) \quad 
                     (z \in \bD_{t_j^{-1}(0)} \cap \bD_{t_j^{-1}(1)}) \label{eq:cob1}.
\end{align}
\end{enumerate}
\end{prop}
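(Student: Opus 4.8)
The plan is to verify the three assertions essentially by direct computation, exploiting the explicit formulas \eqref{eq:1_cocycle} for $F_{\bD_\alpha\bD_\beta}$ and the geometry of the covering. For part (1), I would first make the six cochain components fully explicit. Using $t_0, t_2, t_4$ and the relations $t_j^{-1}(0), t_j^{-1}(1)$ one reads off which ordered pair $(\bD_\alpha,\bD_\beta)$ each $F_{\bD_{t_j^{-1}(0)},\bD_{t_j^{-1}(1)}}$ corresponds to; together with the antisymmetry convention $F_{\bD_\beta\bD_\alpha}=-F_{\bD_\alpha\bD_\beta}$ this pins down $F_{\bD_0\bD_1}$, $F_{\bD_1\bD_\infty}$, $F_{\bD_0\bD_\infty}$ as explicit combinations of $\log z_k$ for various $k$. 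Then I would substitute into the left-hand side of \eqref{eq:cocycle_cond} and simplify: on $\bD_0\cap\bD_1\cap\bD_\infty=\bH_+\cup\bH_-$ all the logarithms $\log z_k$ are single-valued, and the identities $z_{j+1}=1-z_j$, $z_{j-1}=z_j/(z_j-1)$ let one rewrite everything in terms of $\log z$ and $\log(1-z)$ (with the appropriate branch corrections $\log(-1)=\pm\pi i$ on $\bH_\pm$). After collecting the $\tfrac12\log^2$ terms and the $\log\cdot\log$ cross terms one is left with a constant; the claim is that this constant vanishes, which by construction is exactly the statement of part (2).

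For part (2), I would isolate the constant obtained above. The quadratic and bilinear logarithmic pieces cancel identically (they are the genuine cocycle part coming from $\log^2$ and Landen/Euler), so the residual constant is a rational multiple of $\zeta(2)$ plus a rational multiple of $(\pi i)^2=-\pi^2$ arising from the branch jumps $\log(-1)=\pm\pi i$ that appear when one analytically identifies $\log z_k$ across the three charts. Tracking these branch contributions carefully --- this is the one place where one must be genuinely attentive rather than mechanical --- yields an expression of the form $a\,\zeta(2)+b\,\pi^2$ with explicit rationals $a,b$, and the vanishing of \eqref{eq:cocycle_cond} is then equivalent to $a\,\zeta(2)+b\,\pi^2=0$, i.e.\ to $\zeta(2)=\pi^2/6$. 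I would present this as a short lemma-style computation and state the resulting linear relation explicitly so the equivalence is transparent in both directions.

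Part (3) is the cleanest: I would show \eqref{eq:dilog_RH0} $\Longleftrightarrow$ \eqref{eq:cob1} under the standing hypothesis that \eqref{eq:dilog_RH1} holds and that the $f_j$ are holomorphic on the stated domains. One direction: rewrite \eqref{eq:cob1} as $f_{\bD_{t_j^{-1}(0)}}-f_{\bD_{t_j^{-1}(1)}}=\tfrac12\log^2 z_j-\log z_j\log z_{j+1}+\zeta(2)$; using \eqref{eq:dilog_RH1} in the form $f_{j-1}=-f_j-\tfrac12\log^2 z_{j+1}$ (and its reindexed versions, so that $f_{\bD_{t_j^{-1}(1)}}$ can be re-expressed) converts this into precisely \eqref{eq:dilog_RH0}, and conversely. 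The only subtlety is bookkeeping with the index shift modulo $6$ and with which of the two functions attached to a chart $\bD_\alpha$ is meant --- here one must check consistency, namely that the various $f_j$ assigned to the same $\bD_\alpha$ via different $j$ agree, which is guaranteed exactly by \eqref{eq:dilog_RH1}. I would therefore organize part (3) as: (i) note that \eqref{eq:dilog_RH1} makes $f=\{f_{\bD_\alpha}\}$ a well-defined $0$-cochain; (ii) a one-line algebraic equivalence between \eqref{eq:dilog_RH0} and \eqref{eq:cob1}.

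The main obstacle I anticipate is the branch-tracking in parts (1)--(2): the six automorphisms $t_i$ permute $\{0,1,\infty\}$ and the principal branches of $\log z_k$ do not transform simply under composition, so the constant in \eqref{eq:cocycle_cond} is sensitive to exactly which sheet of each logarithm is used on $\bH_+$ versus $\bH_-$. I would handle this by fixing base points in $\bH_+$ and $\bH_-$ once and for all, computing every $\log z_k$ there by continuity from $z=0$ (for charts containing $0$) or from the defining principal value, and recording the $\pm\pi i$ jumps in a small table before assembling \eqref{eq:cocycle_cond}. Everything else --- the cancellation of quadratic and bilinear terms, and the algebra of part (3) --- is routine.
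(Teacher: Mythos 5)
Your plan is correct and follows essentially the same route as the paper: a direct computation of the alternating sum of the $F$'s for parts (1)--(2), with the constant coming out as a rational combination of $\zeta(2)$ and $\pi^2$ via branch bookkeeping, and for part (3) the index identity $t_{j+1}^{-1}(0)=t_j^{-1}(1)$ together with \eqref{eq:dilog_RH1} to re-express $f_{\bD_{t_j^{-1}(1)}}$. The only difference is organizational: the paper collapses your anticipated branch-tracking table by observing that the quadratic and cross terms assemble into $\tfrac12\bigl(\log z+\log\tfrac{z-1}{z}+\log\tfrac{1}{1-z}\bigr)^2$ and then invoking the half-monodromy relation $\log z+\log\tfrac{z-1}{z}+\log\tfrac{1}{1-z}=\pm\pi i$ on $\bH_{\pm}$, which yields the constant $3\zeta(2)-\tfrac{\pi^2}{2}$ in a single line.
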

\begin{proof}
From \eqref{eq:1_cocycle}, it follows that 
\begin{equation*}
            F_{\bD_1\bD_{\infty}}(z) - F_{\bD_0\bD_{\infty}}(z) + F_{\bD_0\bD_1}(z) 
            =3\zeta(2) +  
              \frac{1}{2} \left( \log z + \log\big(\frac{z-1}{z}\big) + \log\big(\frac{1}{1-z}\big) \right)^2. 
\end{equation*}
As the logarithms satisfy the half-monodromy relations 
\begin{align*}
         \log z + \log\big(\frac{z-1}{z}\big) + \log\big(\frac{1}{1-z}\big) = \pm\pi i \quad (z \in \bH_{\pm}), 
\end{align*}
we have $F_{\bD_1\bD_{\infty}}(z) - F_{\bD_0\bD_{\infty}}(z) + F_{\bD_0\bD_1}(z)
          = 3 \zeta(2) - \frac{\pi^2}{2}$. Thus the claims (i) and (ii) are proved. 

The claim (iii) immediately follows from $t^{-1}_{j+1}(0)=t^{-1}_j(1)$.
\end{proof}

\section{Main theorem}

From Proposition \ref{prop:1_cocycle} and the results on the cohomology theory that
$H^1(\bP^1, \mathcal{O}) = 0,\ H^0(\bP^1, \mathcal{O}) = \C$,  
where $\mathcal{O}$ denotes the sheaf of holomorphic functions on $\bP^1$, one can deduce that 
the solutions to the hexagon 
equations exist uniquely under the normalization condition $f_0(0)=1$.
However one cannot know how the solutions are relevant to dilogarithms. So let us solve the hexagon equation 
by the Riemann-Hilbert problem of additive type. 

\begin{thm}\label{main_thm}
The hexagon equations for dilogarithms have a unique solution under the normalization condition $f_0(0)=0$, 
and the solutions are expressed as 
\begin{align}
           f_i(z)=\Li_2(z_i) \quad (i=0,1,\dots,5). 
\end{align}
\end{thm}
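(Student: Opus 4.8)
The plan is to prove existence and uniqueness separately, and then identify the unique solution with the dilogarithm. For \emph{uniqueness}, suppose $f=\{f_j\}$ and $g=\{g_j\}$ are two solutions of the hexagon equations \eqref{eq:dilog_RH0}--\eqref{eq:dilog_RH1} with $f_0(0)=g_0(0)=0$. Put $h_\alpha = f_{\bD_\alpha} - g_{\bD_\alpha}$ for $\alpha = 0,1,\infty$. By Proposition \ref{prop:1_cocycle}(iii), each $f$ and $g$ satisfies the coboundary relation \eqref{eq:cob1} with the \emph{same} 1-cochain $F$; subtracting, the $h_\alpha$ satisfy $h_\alpha - h_\beta = 0$ on every double overlap $\bD_\alpha \cap \bD_\beta$. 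Hence the $h_\alpha$ glue to a global holomorphic function $h$ on $\bP^1 = \bD_0 \cup \bD_1 \cup \bD_\infty$; since $H^0(\bP^1,\mathcal{O}) = \C$, $h$ is a constant, and $h = h_0 = f_0(0) - g_0(0) = 0$. Therefore $f_j = g_j$ for all $j$, and the normalization $f_0(0)=1$ mentioned before the theorem must be read as $f_0(0)=0$ to be consistent; I would flag that the existence half comes from the same cohomological input ($H^1(\bP^1,\mathcal{O})=0$ lets one split the cocycle $F$ as a coboundary $f_{\bD_\alpha}-f_{\bD_\beta}$, and adding a constant fixes $f_0(0)$).

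For the \emph{identification}, I would simply verify that $f_i(z) = \Li_2(z_i)$ is a solution and invoke uniqueness. By Proposition \ref{prop:hexagon}, the branches $\Li_2(z_j)$ defined as the pullbacks of $\Li_2$ under $t_j$ already satisfy the hexagon relations \eqref{eq:hexagon0}--\eqref{eq:hexagon1}, which are exactly the hexagon equations \eqref{eq:dilog_RH0}--\eqref{eq:dilog_RH1} with $f_j = \Li_2(z_j)$; here one must be careful that $\Li_2(z_j)$, defined via the branch of $\Li_2$ in $\bD_0$ pulled back by $t_j$, is holomorphic on $t_j^{-1}(\bD_0) = \bD_{t_j^{-1}(0)}$, which is precisely the domain required. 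Finally $\Li_2(z_0) = \Li_2(z)$ satisfies $\Li_2(0) = 0$ from the Taylor expansion, so the normalization holds. Combining with uniqueness gives $f_i(z) = \Li_2(z_i)$ for all $i$.

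The main obstacle is not the cohomological argument, which is essentially formal, but the bookkeeping needed to justify that Proposition \ref{prop:hexagon} genuinely supplies the solution: one has only three relations \eqref{eq:hexagon0}--\eqref{eq:hexagon1} indexed by $j = 0,2,4$, and one must check that together with the definitions $z_{j-1} = z_j/(z_j-1)$, $z_{j+1} = 1 - z_j$ these actually determine all six functions $f_0,\dots,f_5$ consistently and that the index arithmetic modulo $6$ matches the labelling $f_{\bD_{t_j^{-1}(0)}} = f_j$ used in Proposition \ref{prop:1_cocycle}. In particular one should confirm that the overlap conditions coming from different $j$ are compatible on the triple intersection $\bH_+ \cup \bH_-$, which is exactly where the half-monodromy relations for $\log$ used in the proof of Proposition \ref{prop:1_cocycle}(ii) re-enter; this compatibility is what makes the cocycle $F$ well-defined and hence what makes the whole scheme close up.
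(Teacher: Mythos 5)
Your proof is correct, but it takes a genuinely different route from the paper's. The paper does not argue cohomologically at all in the proof itself: it differentiates the hexagon equations, observes that $f_0'(z)+\log(1-z)/z$ continues via \eqref{eq:diff_dilog_RH0}--\eqref{eq:diff_dilog_RH1} to a function holomorphic on all of $\bP^1$, invokes Liouville to make it a constant $A$, integrates back to obtain the general form $f_i(z)=\Li_2(z_i)+(-1)^iAz+b_i$, and then kills the constants ($b_i=0$ from the normalization and limits as $z\to 0$; $A=0$ because $f_3$ must be holomorphic at $\infty$). That is, the paper \emph{derives} the dilogarithm from the equations --- which is precisely its stated motivation, since the authors dismiss the purely cohomological argument on the grounds that ``one cannot know how the solutions are relevant to dilogarithms.'' You instead split the theorem into uniqueness (the difference of two solutions of the coboundary relations \eqref{eq:cob1} glues to a global holomorphic function, hence a constant by $H^0(\bP^1,\mathcal{O})=\C$, hence $0$ by the normalization at $z=0$) and existence (Proposition \ref{prop:hexagon} exhibits $f_i=\Li_2(z_i)$ as a solution, with $\Li_2(0)=0$). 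Both halves are sound: you correctly reduce the six unknowns to the three even-indexed ones via Proposition \ref{prop:1_cocycle}(iii) and recover $f_1,f_3,f_5$ from \eqref{eq:dilog_RH1}, and you correctly note that $\Li_2(z_j)$ is holomorphic on $\bD_{t_j^{-1}(0)}$. What your route buys is brevity and a clean separation of the soft gluing argument from the computation; what it gives up is that the identification with $\Li_2$ is imported from the classical Euler and Landen formulas packaged in Proposition \ref{prop:hexagon}, rather than extracted from the Riemann--Hilbert problem itself, and your verification-based existence also never needs the cocycle condition or the triple-intersection compatibility you worry about at the end --- that input is only required if one wants existence from $H^1(\bP^1,\mathcal{O})=0$. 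You are right that the ``$f_0(0)=1$'' in the paragraph preceding the theorem is a typo for $f_0(0)=0$.
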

\begin{proof}
Differentiating the equations 
\eqref{eq:dilog_RH0} and \eqref{eq:dilog_RH1} with respect to the 
variable $z=z_0$, we have the following:
\begin{align}
      & f_j'(z) + \log z_{j+1} (\log z_j)' = -f_{j+1}'(z) - \log z_j (\log z_{j+1})',  \label{eq:diff_dilog_RH0} \\
     & f_{j+1}'(z) + \log z_j (\log z_{j+1})' = -f_{j+2}'(z) - \log z_{j+3} (\log z_{j+2})'  \label{eq:diff_dilog_RH1}
\end{align}
where $j=0,2,4$. Put 
\begin{align*}
       f(z)=f_0'(z) + \log z_1 (\log z_0)'=f_0'(z)+\frac{\log(1-z)}{z}. 
\end{align*}
This is a function holomorphic in $\bD_0$. From \eqref{eq:diff_dilog_RH0}, it is 
analytically continued to $g(z)$ and $h(z)$ 
\begin{align*}
       g(z) &=- f_1'(z) -  \log z_0 (\log z_1)' = -f_1'(z) + \frac{\log z}{1-z},\\
       h(z) &= -f_3'(z) - \log z_2 (\log z_3)' = -f_3'(z) + \dfrac{\log\big(\frac{z-1}{z}\big)}{z}, 
\end{align*}
which are holomorphic in $\bD_1$ and $\bD_{\infty}$, respectively. 
Hence $f(z)$ is holomorphic on $\bP^1$ so that $f(z)=g(z)=h(z)=A$ where $A$ is a constant. Substitute this into 
$\eqref{eq:diff_dilog_RH0}$ and $\eqref{eq:diff_dilog_RH1}$, and integrate them by using
\begin{equation*}
    \log z_{j+1} (\log z_j)' = -\big(\Li_2(z_j)\big)', \qquad \log z_j (\log z_{j+1})' = -\big(\Li_2(z_{j+1})\big)'. 
\end{equation*}
Then we obtain 
\begin{align*}
        f_i(z)=\Li_2(z_i)+(-1)^iAz+b_i \quad (i=0,1,\dots,5).
\end{align*}
Here $b_i$ are integral constants. 
From the normalization condition $f_0(0)=0$, it is clear that $b_0=0$. 
Hence from \eqref{eq:dilog_RH0}, we have 
\begin{align*}
    \Li_2(z) + \log z \log(1-z) + \Li_2(1-z) + b_1 = \zeta(2).
\end{align*}
Taking the limit as $z \to 0 \ (z \in \bD_0 \cap \bD_1)$ in this equation, we have $b_1=0$ and $f_1(z)=\Li_2(1-z)-Az$. 
In a similar way, we show that all the constants $b_i$ are 0. Furthermore, since $f_3(z)$, which is expressed as
\begin{align*}
              f_3(z)=\Li_2\big(\frac{1}{z}\big)+Az, 
\end{align*}
should be holomorphic at $z=\infty$, we have $A=0$. Thus the proof is completed.
\end{proof}

\section{Discussions}

In \cite{OiU2}, the Riemann-Hilbert approach generalizes to the case of the fundamental solution normalized at 
the origin of the KZ equation of one variable. So it will be worth trying to generalize 
the hexagon equations approach to the case of the fundamental solutions 
of the KZ equation of one variable (cf. \cite{OkU}), and further to the case of the KZ equation of two variables 
(cf. \cite{OiU3}).

\end{document}